\providecommand{\U}[1]{\protect\rule{.1in}{.1in}}
\newtheorem{theorem}{Theorem}
\theoremstyle{plain}
\newtheorem{corollary}{Corollary}
\newtheorem{example}{Example}
\newtheorem{lemma}{Lemma}
\newtheorem{problem}{Problem}
\newtheorem{remark}{Remark}
\numberwithin{equation}{section}
\begin{document}
\title[Reflexivity of Banach $C(K)$-modules]{Reflexivity of Banach $C(K)$-modules via the reflexivity of Banach lattices}
\author{Arkady Kitover}
\address{Department of Mathematics, Community College of Philadelphia, 1700 Spring
Garden Street, Philadelphia, PA 19130 }
\email{akitover@ccp.edu}
\urladdr{}
\author{Mehmet Orhon}
\address{Department of Mathematics and Statistics, University of New Hampshire, Durham,
NH 03824}
\email{mo@unh.ed}
\date{\today}
\subjclass[2010]{Primary 46B10, 46A25; Secondary 46B42}
\keywords{Reflexivity, Banach $C(K)$-modules, Banach lattices}

\begin{abstract}
We extend the well known criteria of reflexivity of Banach lattices due to
Lozanovsky and Lotz to the class of finitely generated Banach $C(K)$-modules.
Namely we prove that a finitely generated Banach $C(K)$-module is reflexive if
and only if it does not contain any subspace isomorphic to either $l^{1}$ or
$c_{0}$.

\end{abstract}
\maketitle


\section{Introduction}

Suppose $K$ is a compact Hausdorff space and $C(K)$ is the algebra of complex
(or real) valued functions on $K$ with the supremum norm. Let $X$ be a Banach
space and let $\mathcal{L}(X)$ denote the algebra of all bounded linear
operators on $X$ with the operator norm. Let $m:C(K)\rightarrow\mathcal{L}(X)$
be a contractive and unital algebra homomorphism. To assume that $m$ is
contractive is not a loss of generality. When $m$ is bounded, we can make $m$
contractive by passing to an equivalent norm on $X$ \cite[V.3.3, p. 361]{Sch}.
In general the kernel of $m$ is a closed ideal of $C(K)$. Therefore, by
reducing to the quotient $C(K)/Ker(m)$, we may assume that $m$ is one-to-one.
Then we will regard $X$ as a Banach $C(K)$-module with $ax=m(a)(x)$ for all
$a\in C(K)$ and $x\in X.$ When $x\in X,$ we denote by $X(x):=cl(C(K)x)$ the
\textbf{cyclic subspace} of $X$ generated by $x$. (Here `$cl$' denotes closure
in norm in $X$.) It is familiar that $X(x)$ is representable as a Banach
lattice where the cone $X(x)_{+}=cl(C(K)_{+}x)$ and $x$ is a quasi-interior
point of $X(x)_{+}$ \cite[V.3]{Sch}, \cite{Ve}, \cite[4.6, p. 22]{AAK}. (Here
$C(K)_{+}$denotes the non-negative continuous functions on $C(K).$) Note that
$m$ is contractive implies that $\Vert ax\Vert\leq\Vert bx\Vert$ whenever
$|a|\leq|b|$ for some $a,b\in C(K)$ and for any $x\in X$. This in turn induces
a Banach lattice norm on a cyclic subspace $X(x)$ without the need to pass to
an equivalent norm. More generally by $X(x_{1},x_{2},\ldots,x_{n})$ we denote
the closed submodule of $X$ generated by the subset $\left\{  x_{1}%
,x_{2},\ldots,x_{n}\right\}  $. If for such a subset we have $X=X(x_{1}%
,x_{2},\ldots,x_{n})$, we will say that $X$ is \textbf{finitely generated}. It
is a well known result of Lozanovsky \cite{Loz}, \cite{Sch}, \cite[2.4.15, p.
94]{MN} that a Banach lattice is reflexive if and only if it does not contain
a copy of either $l^{1}$ or $c_{0}.$ We say a Banach space $X$ contains a copy
of a Banach space $Y,$ if there is a subspace of $X$ that is isomorphic to
$Y.$ In this paper our purpose is to extend the result of Lozanovsky to
finitely generated Banach $C(K)$-modules. Namely, a finitely generated Banach
$C(K)$-module $X$ is reflexive if and only if $X$ does not contain a copy of
either $l^{1}$ or $c_{0}.$

Concerning the reflexivity of Banach lattices, there is a result due to Lotz
\cite{Lot}, \cite{Sch},\cite[2.4.15, p. 94]{MN} that in one direction is
stronger than the Theorem of Lozanovsky. Namely, Lotz showed that a Banach
lattice is reflexive if and only if it does not contain a copy of either
$l^{1}$ or $c_{0}$ as a sublattice. That is, no sublattice of the Banach
lattice is lattice isomorphic to $l^{1}$ or $c_{0}$.\ In the proof of Theorem
\ref{t1} we will use the Theorem of Lotz as well.

\begin{theorem}
\label{t1}Let $X$ be a finitely generated Banach $C(K)$-module. Then the
following are equivalent:

\begin{enumerate}
\item $X$ is reflexive,

\item $X$ does not contain a copy of either $l^{1}$ or $c_{0}$,

\item $X^{\prime}$ does not contain a copy of $l^{1}$.

\item Each cyclic subspace of $X$ does not contain a copy of either $l^{1}$ or
$c_{0}$,

\item Each cyclic subspace of $X$ is reflexive.
\end{enumerate}
\end{theorem}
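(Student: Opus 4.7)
The plan is to prove the five conditions are equivalent by organizing the implications into three groups: trivial ones, the Lozanovsky input applied to each cyclic subspace, a duality argument for $(3)$, and one genuinely non-trivial inductive step. The trivial implications are $(1)\Rightarrow(2),(3),(5)$ (reflexive spaces and their duals do not contain $l^{1}$ or $c_{0}$, and closed subspaces of reflexive spaces are reflexive) and $(2)\Rightarrow(4)$, $(5)\Rightarrow(4)$ (cyclic subspaces are particular subspaces of $X$). The equivalence $(4)\Leftrightarrow(5)$ follows at once from Lozanovsky's theorem applied to each cyclic subspace $X(x)$, which as noted in the introduction is a Banach lattice with $x$ as a quasi-interior point.

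For $(3)\Rightarrow(5)$, observe that the dual $X(x)^{\prime}=X^{\prime}/X(x)^{\perp}$ is a quotient of $X^{\prime}$. Since $l^{1}$ is projective in the category of Banach spaces (the standard lift of its unit vector basis via the open mapping theorem), no quotient of a Banach space lacking a copy of $l^{1}$ can contain $l^{1}$; hence $(3)$ forces $X(x)^{\prime}$ not to contain $l^{1}$. Were $X(x)$ to contain a copy of $l^{1}$ or $c_{0}$, Hahn--Banach extension would give $l^{\infty}$ or $l^{1}$ respectively as a quotient of $X(x)^{\prime}$; combining the separable universality of $l^{\infty}$ (so $l^{1}\hookrightarrow l^{\infty}$) with the projectivity of $l^{1}$ once more, $l^{1}$ would embed into $X(x)^{\prime}$, a contradiction. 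Thus each $X(x)$ avoids $l^{1}$ and $c_{0}$, and Lozanovsky's theorem yields $(5)$.

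The main work is $(5)\Rightarrow(1)$, which I plan to prove by induction on the number of generators $n$. The case $n=1$ is immediate from $(5)$. For $n>1$, set $X_{0}=X(x_{1})$, reflexive by $(5)$; the quotient $X/X_{0}$ is a Banach $C(K)$-module generated by $\bar{x}_{2},\ldots,\bar{x}_{n}$, and by the three-space property of reflexivity it suffices to show $X/X_{0}$ is reflexive. By the induction hypothesis it suffices to verify that every cyclic subspace $(X/X_{0})(\bar{y})$ is reflexive. Writing $\pi\colon X\to X/X_{0}$, one has the sandwich
\[
\pi(X(y))\subseteq(X/X_{0})(\bar{y})\subseteq\overline{\pi(X(y))},
\]
so as soon as $\pi(X(y))$ is closed in $X/X_{0}$ these three sets coincide and $(X/X_{0})(\bar{y})$ is a continuous image of the reflexive space $X(y)$, hence reflexive.

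The main obstacle is therefore the closedness of $\pi(X(y))$, equivalently the closedness of the sum $X(y)+X(x_{1})$ in $X$, a property which can fail in general for sums of two reflexive closed subspaces of a Banach space. I anticipate two routes: either (i) prove this closedness directly, combining the Banach lattice structures of $X(y)$ and $X(x_{1})$ with the closed range theorem applied to the natural module homomorphism $X(y)\oplus X(x_{1})\to X$; or (ii) bypass the induction and establish $(5)\Rightarrow(2)$ contrapositively, by taking a putative copy of $l^{1}$ or $c_{0}$ in $X$, approximating its basis by elements of $C(K)x_{1}+\cdots+C(K)x_{n}$, and using Bessaga--Pelczynski stability together with a Rosenthal $l^{1}$-dichotomy and gliding hump procedure to force an offending copy inside a single $X(x_{i})$, contradicting $(4)$. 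Either route requires careful exploitation of the finite-generation hypothesis to control decompositions $y=\sum_{i}a_{i}x_{i}$.
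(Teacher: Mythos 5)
Your treatment of the easy implications is fine, and your $(3)\Rightarrow(5)$ via the projectivity of $l^{1}$ (lifting an $l^{1}$-basis from the quotient $X(x)^{\prime}=X^{\prime}/X(x)^{\perp}$) is a legitimate alternative to the paper's route, which instead proves $(3)\Rightarrow(2)$ directly using Bessaga--Pelczynski and Hagler's theorem. But the proof of the one genuinely hard implication is not there: you state two candidate routes for $(5)\Rightarrow(1)$ and carry out neither. Route (i) rests on the closedness of $X(y)+X(x_{1})$, which you yourself concede can fail for sums of reflexive closed subspaces, and you offer no mechanism by which the module structure would rescue it; route (ii) is a one-sentence gesture at a gliding-hump argument with no indication of how a copy of $l^{1}$ or $c_{0}$ spanned by general combinations $\sum_{i}a_{ki}x_{i}$ would be forced into a single cyclic subspace. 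Moreover, your induction is set up backwards relative to what makes the argument work: you quotient by the cyclic piece $X(x_{1})$ and are left needing condition $(5)$ for the $(n-1)$-generated quotient $X/X(x_{1})$, a hypothesis that does not obviously descend to quotients --- which is exactly why you get stuck on closedness of $\pi(X(y))$.

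The paper inverts this: $Y$ is the submodule generated by $r$ of the $r+1$ generators (reflexive by the induction hypothesis), and the quotient $X/Y$ is \emph{cyclic}, hence a Banach lattice. The real content is then (a) showing that $X/Y$ has order continuous norm, which requires first upgrading $m(C(K))$ to an algebra generated by a Bade complete Boolean algebra of projections (using Pelczynski's theorem, since no cyclic subspace contains $c_{0}$) and then passing this to the quotient (Lemma \ref{L1}); (b) applying Lotz's theorem --- not Lozanovsky's --- to get a \emph{disjoint} sequence $\{[e_{n}]\}$ spanning a sublattice copy of $l^{1}$ or $c_{0}$ in $X/Y$; and (c) lifting that disjoint sequence back to $X$ using the band projections $\chi_{n}$, so that the lifted copy lands inside a single cyclic subspace of $X$ and contradicts $(4)$. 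The $l^{1}$ case of the lift is easy, but the $c_{0}$ case requires constructing correctors $w_{n}\in Y$ via the weakly compact sets $H_{n}=\zeta_{n}\{y\in Y:\Vert z_{n}+y\Vert\leq D\}$ and an ultranet limit, so that $u_{n}=e_{n}+\chi_{n}w_{n}$ is a bounded disjoint sequence in $X$. None of this machinery (Bade completeness, Lemma \ref{L1}, order continuity, Lotz's theorem, the weak-compactness construction) appears in your proposal, and it is precisely what closes the gap you identified; disjointness is what lets one bypass any closedness question about $X(y)+Y$.
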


Since one may regard any Banach space as a Banach module over $\mathbb{C}$ (or
$\mathbb{R}$), the well known example, the James Space \cite{J}, \cite[1.d.2,
p. 25]{LiT}, \cite[5.17, p. 325]{MN}, or its variants show that in general we
cannot drop the condition that the module is finitely generated.

\section{Preliminaries}

Initially we will recall some information concerning Banach $C(K)$-modules
that we need. Suppose that the compact Hausdorff space $K$ is Stonian in the
sense that the closure of an open set in $K$ is open (i.e., clopen). Then the
characteristic functions of the clopen sets (i.e., the idempotents in $C(K)$)
form a complete Boolean algebra of projections $\mathcal{B}$ on $X$ and the
closed linear span of $\mathcal{B}$ in $\mathcal{L}(X)$ is equal to $m(C(K)).$
A stronger condition is to require that $\mathcal{B}$ is a \emph{Bade complete
}Boolean algebra of projections on $X$ in the sense that in addition to being
a complete Boolean algebra, $\mathcal{B}$ has the property that whenever
$\{\chi_{\alpha}:\alpha\in\Gamma\}$ is an increasing net of idempotents in
$\mathcal{B}$ with least upper bound $\chi\in\mathcal{B}$ and $x\in X,$ then
$\chi_{\alpha}x$ converges to $\chi x$ in $X$ \cite{B1}, \cite[XVII.3.4, p.
2197 ]{DS}, \cite[V.3, p. 315]{Sch}. This is equivalent to that $K$ is
hyperstonian (i.e., $C(K)$ is a dual Banach space) and that the homomorphism
$m$ is continuous with respect to the weak*-topology on $C(K)$ and the weak
operator topology on $\mathcal{L}(X)$ \cite[Theorem 1]{O1}. It also implies
that $m(C(K))$ is closed in the weak operator topology in $\mathcal{L}(X)$
\cite[XVII.3.17, p. 2213]{DS}. In such a case each cyclic subspace $X(x)$, as
a Banach lattice, has order continuous norm \cite[V.3.6, p. 318]{Sch},
\cite{Ve} and its ideal center $Z(X(x))$ is given by $m(C(K))_{|X(x)}$
\cite[6.2.3, p. 35]{AAK}, \cite[Theorem 1]{O3}, that is, each operator in
$m(C(K))$ is restricted to the subspace $X(x)$.

For a general Banach $C(K)$-module $X,$ the weak operator closure of $m(C(K))$
in $\mathcal{L}(X)$ is given by the range of a map $\widehat{m}:C(\widehat
{K})\rightarrow\mathcal{L}(X)$ where $\widehat{K}$ is a compact Hausdorff
space that contains $K$ as a quotient and $\widehat{m}$ is a contractive,
unital and one-to-one algebra homomorphism that extends $m$ \cite[6.3, p.
35]{AAK}, \cite[Corollary 8]{HO}. The closed submodules of $X$ are the same
with respect to either module structure therefore we may assume that $m(C(K))$
is weak operator closed without any loss of generality. The weak operator
closure of $m(C(K))$ is generated by a Bade complete Boolean algebra of
projections if and only if $m(C(K))$ has weakly compact action on $X$ in the
sense that the mapping $C(K)\rightarrow X$ defined by $a\rightarrow ax$ for
all $a\in C(K)$ for a fixed $x\in X$ is a weakly compact linear map for each
$x\in X$ \cite[Theorem 3]{O1}. For example if $X$ does not contain any copy of
$c_{0},$ then $m(C(K))$ has weakly compact action on $X$ \cite{Pel}. In
general even when $m(C(K))$ is weak operator closed in $\mathcal{L}(X),$ when
restricted to a closed submodule $Y,$ $m(C(K))_{|Y}$ need not be weak operator
closed in $\mathcal{L}(Y)$. An exception is when the weak operator closed
algebra $m(C(K))$ is generated by a Bade complete Boolean algebra of
projections on $X.$ In this case both $m(C(K))|_{Y}$ and $m(C(K))|_{X/Y}$ are
generated by a Bade complete Boolean algebra of projections.

\begin{lemma}
\label{L1}Suppose that $X$ is a Banach $C(K)$-module such that $m(C(K))$ is
generated by a Bade complete Boolean algebra of projections on $X$. Let $Y$ be
a closed submodule of $X.$ Then, when restricted to either $Y$ or $X/Y$,
$m(C(K))$ is generated by a Bade complete Boolean algebra of projections.
\end{lemma}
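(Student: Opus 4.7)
The plan is to produce the required Bade complete Boolean algebras explicitly as a restriction and a quotient of $\mathcal{B}$, the given Bade complete Boolean algebra of projections that generates $m(C(K))$ on $X$. Since $Y$ is a closed $C(K)$-submodule and $\mathcal{B}\subset m(C(K))$, the subspace $Y$ is invariant under every $\chi\in\mathcal{B}$; consequently each $\chi$ restricts to a projection $\chi|_Y$ on $Y$ and induces a projection $\bar\chi$ on $X/Y$. I would set $\mathcal{B}_Y=\{\chi|_Y:\chi\in\mathcal{B}\}$ and $\mathcal{B}_{X/Y}=\{\bar\chi:\chi\in\mathcal{B}\}$, which are Boolean algebras of projections on $Y$ and $X/Y$ respectively, each arising as a quotient of $\mathcal{B}$ by the Boolean ideals $I=\{\chi\in\mathcal{B}:\chi|_Y=0\}$ and $J=\{\chi\in\mathcal{B}:\chi X\subset Y\}$. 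They clearly generate $m(C(K))|_Y$ and $m(C(K))|_{X/Y}$ respectively, because by Bade's theorem $m(C(K))$ is already the norm-closed linear span of $\mathcal{B}$, and norm closures survive both restriction to $Y$ and passage to $X/Y$.

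The key structural input is that $I$ and $J$ are order-closed ideals of $\mathcal{B}$, and this is exactly where the Bade completeness on $X$ enters. For $I$: if $\chi_{\alpha}$ is an increasing net in $I$ with $\chi=\sup\chi_{\alpha}$ in $\mathcal{B}$, then $\chi_{\alpha}y\to\chi y$ for every $y\in Y$ by Bade completeness on $X$; since $\chi_{\alpha}y=0$, we get $\chi|_Y=0$ and $\chi\in I$. For $J$: if $\chi_{\alpha}\in J$ is increasing with sup $\chi$, then for every $x\in X$ the norm-convergent sequence $\chi_{\alpha}x\in Y$ has its limit $\chi x$ in the norm-closed subspace $Y$, so $\chi\in J$.

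Once $I$ and $J$ are order-closed, I would invoke the standard fact that the quotient of a complete Boolean algebra by an order-closed ideal is itself a complete Boolean algebra and that the quotient map preserves arbitrary suprema; the distributivity step $\chi\wedge\sigma^{c}=\sup_{\alpha}(\chi_{\alpha}\wedge\sigma^{c})$ needed in the proof of supremum-preservation follows at once from Bade completeness on $X$, since $(\chi_{\alpha}\sigma^{c})x\to(\chi\sigma^{c})x$ for all $x\in X$. This immediately gives Bade completeness of $\mathcal{B}_Y$: a given increasing net $\pi_{\alpha}\uparrow\pi$ in $\mathcal{B}_Y$ can be lifted to an increasing net $\chi_{\alpha}$ in $\mathcal{B}$ (pass to cumulative suprema, which is legitimate because the quotient map preserves suprema) with $\chi_{\alpha}|_Y=\pi_{\alpha}$ and $(\sup\chi_{\alpha})|_Y=\pi$; then Bade completeness on $X$ reads $\chi_{\alpha}y\to(\sup\chi_{\alpha})y$ for each $y\in Y$, which is precisely $\pi_{\alpha}y\to\pi y$. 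The argument for $\mathcal{B}_{X/Y}$ on $X/Y$ is entirely parallel, using the ideal $J$ and the quotient norm. The main technical point will be the verification that the quotient Boolean algebra homomorphism preserves arbitrary suprema; everything else is bookkeeping once this and the order-closedness of $I$ and $J$ are in hand.
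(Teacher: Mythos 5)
Your proposal is correct and follows essentially the same route as the paper: both arguments identify the kernel of the restricted/quotient action as a principal ideal (a band) cut out by an idempotent $\pi$ --- you via order-closedness of the Boolean ideals $I$ and $J$ (using that $Y$ is norm-closed and $\mathcal{B}$ is Bade complete on $X$), the paper via weak*-closedness of the kernel of $m$ in $C(K)$ --- and then transfer the Bade convergence to $X/Y$ through the same quotient-norm inequality $\Vert(\chi-\chi_{\alpha})[x]\Vert\leq\Vert(\chi-\chi_{\alpha})x\Vert$. Your explicit lifting of increasing nets from the quotient Boolean algebra is a point the paper leaves implicit, but it is the same argument in Boolean-algebra rather than $C(K)$ language.
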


\begin{proof}
Since $m(C(K))$ is generated by a Bade complete Boolean algebra of projections
on $X,$ we have that $K$ is hyperstonian. The Boolean algebra of the
idempotents $\mathcal{B}$ in $C(K)$ is the Bade complete Boolean algebra of
projections that generates $m(C(K)).$ That is whenever $\{\chi_{\alpha}%
:\alpha\in\Gamma\}$ is an increasing set of idempotents in $\mathcal{B}$ with
least upper bound $\chi\in\mathcal{B}$ and $x\in X,$ then $\chi_{\alpha}x$
converges to $\chi x$ in $X.$ In general $m$ will not be one-to-one when
restricted to $Y$ or $X/Y.$ However that $m$ is weak* to weak-operator
continuous implies that the kernel of $m$ in both cases will be a weak*-closed
ideal in $C(K).$ Here we will give the proof of the lemma in the case of
$X/Y$. In the case of $Y$, a proof is given in \cite[XVII.11, p. 2204 and
XVII.23 p. 2215]{DS}, but a proof similar to the one below is also possible.
Let $\{a_{\alpha}\}$ be a net in $C(K)$ that converges to $a\in C(K)$ in the
weak*-topology. Then $m$ is weak* to weak-operator continuous implies that
$\{a_{\alpha}x\}$ converges to $ax$ weakly for each $x\in X$. Suppose that
$\{a_{\alpha}\}$ is in the kernel of $m$ when restricted to $X/Y.$ Then
$a_{\alpha}x\in Y$ for all $x\in X$ and for all $\alpha$ in the index set.
Since $Y$ is weakly closed, we have $ax\in Y.$ So the kernel of $m$ is a
weak*-closed ideal and therefore a weak*-closed band in $C(K).$ That is there
exists an idempotent $\pi\in C(K)$ such that $m$ is one-to-one on $\pi C(K)$
and the kernel of $m$ is $(1-\pi)C(K)$. Clearly $\pi C(K)=C(S)$ for some
clopen subset $S$ of $K$ and furthermore, since $\pi$ is a weak*-continuous
band projection on $C(K)$, $C(S)$ is also a dual Banach space. For any $x\in
X,$ let $[x]=x+Y$ in $X/Y$. Then for the increasing net of idempotents above
we have
\[
||(\chi-\chi_{\alpha})[x]||\leq||(\chi-\chi_{\alpha})x||
\]
where the right hand side of the inequality goes to $0$ in $X.$ Hence
$\pi\mathcal{B}$ is a Bade complete Boolean algebra of projections on $X/Y.$
\end{proof}

We also need some lemmas concerning Banach spaces.

\begin{lemma}
\label{L2}Let $X$ be a Banach space and $Y$ be a reflexive subspace of $X.$
For any $x\in X$, let $[x]=x+Y$ in $X/Y$.

\begin{enumerate}
\item For any $x\in X$ there is $y\in Y$ such that $||[x]||=||x+y||$.

\item If $||[x]||\leq C$ for some $x\in X$, then $G=\{y\in Y:||x+y||\leq C\}$
is a non-empty, convex and weakly compact subset of $Y.$
\end{enumerate}
\end{lemma}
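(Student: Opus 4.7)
The plan for (1) is to invoke the standard existence-of-best-approximation argument in a reflexive subspace. Let $d=\|[x]\|$ and pick a minimizing sequence $(y_n)\subset Y$ with $\|x+y_n\|\to d$. Since $\|y_n\|\leq\|x+y_n\|+\|x\|$, the sequence $(y_n)$ is bounded in $Y$. Reflexivity of $Y$ gives a subsequence $y_{n_k}$ that converges weakly in $Y$ to some $y\in Y$; viewing this convergence in $X$, the vectors $x+y_{n_k}$ converge weakly to $x+y$. The norm on $X$ is weakly lower semicontinuous, so
\[
\|x+y\|\leq\liminf_k\|x+y_{n_k}\|=d,
\]
while the reverse inequality is the definition of the quotient norm. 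Hence $\|x+y\|=\|[x]\|$, as required.

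For (2), non-emptiness of $G$ is an immediate consequence of (1): if $\|[x]\|\leq C$, part (1) produces some $y\in Y$ with $\|x+y\|=\|[x]\|\leq C$, so $y\in G$. Convexity of $G$ follows from the triangle inequality, since for $y_1,y_2\in G$ and $\lambda\in[0,1]$ we have
\[
\|x+\lambda y_1+(1-\lambda)y_2\|\leq\lambda\|x+y_1\|+(1-\lambda)\|x+y_2\|\leq C.
\]

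For weak compactness of $G$, the plan is to write $G=Y\cap B$, where $B$ is the closed ball of radius $C$ in $X$ centered at $-x$. The ball $B$ is norm-closed and convex, hence weakly closed in $X$; the subspace $Y$, being norm-closed (as reflexive subspaces are complete and therefore closed), is likewise weakly closed in $X$. Thus $G$ is weakly closed in $X$, and a fortiori weakly closed in $Y$. Since $\|y\|\leq\|x+y\|+\|x\|\leq C+\|x\|$ for every $y\in G$, the set $G$ is bounded in $Y$. Reflexivity of $Y$ means that bounded weakly closed subsets of $Y$ are weakly compact, so $G$ is weakly compact.

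There is no real obstacle here: the argument rests only on weak lower semicontinuity of the norm, the fact that a norm-closed subspace is weakly closed, and the characterization of reflexivity in terms of weak compactness of bounded closed convex sets. The only point worth flagging is the compatibility of weak topologies: weak convergence in the subspace $Y$ agrees with weak convergence in $X$ on sequences (or nets) lying in $Y$, which is what allows us to translate between the ambient space $X$ and the reflexive subspace $Y$ throughout both parts.
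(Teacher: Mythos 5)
Your proof is correct and follows essentially the same route as the paper: a bounded minimizing sequence, a weakly convergent subsequence extracted via reflexivity of $Y$, and weak lower semicontinuity of the norm (which the paper verifies by hand with norming functionals) to get the best approximation, followed by the observation that $G$ is a bounded, convex, weakly closed subset of the reflexive space $Y$. No issues to flag.
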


\begin{proof}
Let $x\in X\setminus Y$ and $n$ be a positive integer. There is $y_{n}\in Y$
such that $||x+y_{n}||<||[x]||+\frac{1}{n}.$ Then $||y_{n}||< 2||x||+\frac
{1}{n}$ for each $n=1,2\ldots.$ That is $\{y_{n}\}$ is a bounded sequence in
the reflexive space $Y.$ Therefore it has a weakly convergent subsequence.
Without loss of generality assume that $\{y_{n}\}$ converges weakly to some
$y\in Y.$ Let $f\in X^{\prime}$ with $||f||=1.$ Given any $\varepsilon>0,$ we
have
\[
|f(x+y)-f(x+y_{n})|=|f(y-y_{n})|<\varepsilon
\]
for sufficiently large $n.$ Hence, for sufficiently large $n,$%
\[
|f(x+y)|<||x+y_{n}||+\varepsilon<||[x]||+\varepsilon+\frac{1}{n}.
\]

Therefore $||x+y||\leq||[x]||+\varepsilon$ for all $\varepsilon>0.$ Then
$||x+y||=||[x]||$ and $||y||\leq||x||+C.$ From this it follows that $G$ is
non-empty, bounded, and closed. It is easy to check that $G$ is convex. Since
$Y$ is reflexive $G$ is weakly compact.
\end{proof}

\begin{lemma}
\label{L3}Let $X$ be a Banach space such that its dual $X^{\prime}$ does not
contain any copy of $l^{1}.$ Then $X$ does not contain any copy of either
$l^{1}$ or $c_{0}.$
\end{lemma}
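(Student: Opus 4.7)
The plan is to prove the contrapositive: if $X$ contains an isomorphic copy of either $l^{1}$ or $c_{0}$, then $X^{\prime}$ contains a copy of $l^{1}$. Both cases will follow the same blueprint. From an isomorphic embedding $i\colon Y\hookrightarrow X$, with $Y$ equal to $l^{1}$ or $c_{0}$, the Hahn--Banach theorem gives that the restriction map $i^{\prime}\colon X^{\prime}\to Y^{\prime}$ is a bounded linear surjection. The structural tool I will invoke is the well-known projective property of $l^{1}$: given any bounded surjection $q\colon E\to F$ of Banach spaces and any bounded operator $T\colon l^{1}\to F$, a bounded lift $\widetilde{T}\colon l^{1}\to E$ with $q\widetilde{T}=T$ exists, since one can lift the image of each basis vector of $l^{1}$ through $q$ with uniformly bounded norms (via the open mapping theorem) and then extend linearly.

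In the case $Y\cong c_{0}$, the dual $Y^{\prime}\cong l^{1}$ is a quotient of $X^{\prime}$; lifting the identity map on $l^{1}$ through $i^{\prime}$ produces a bounded section $l^{1}\to X^{\prime}$, which is automatically an isomorphic embedding (and in fact realizes $l^{1}$ as a complemented subspace of $X^{\prime}$). In the case $Y\cong l^{1}$, the dual $Y^{\prime}\cong l^{\infty}$ is a quotient of $X^{\prime}$. Since $l^{\infty}$ is injective, every separable Banach space embeds isomorphically into $l^{\infty}$; in particular I can fix an embedding $j\colon l^{1}\to l^{\infty}$. Lifting $j$ through $i^{\prime}$ yields $\widetilde{j}\colon l^{1}\to X^{\prime}$ satisfying $i^{\prime}\widetilde{j}=j$, and the estimate $\|\widetilde{j}(x)\|\geq \|j(x)\|/\|i^{\prime}\|$ transfers the embedding property from $j$ to $\widetilde{j}$, giving $X^{\prime}\supset l^{1}$.

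The only delicate point I anticipate is the $l^{1}$ case, where one must both recall the classical embedding of any separable Banach space into $l^{\infty}$ and verify that the lift of an embedding remains an embedding; the latter is immediate from the open mapping estimate on $i^{\prime}$. For the $c_{0}$ case one may alternatively cite the Bessaga--Pe\l czy\'nski theorem directly, which already asserts that $c_{0}\hookrightarrow X$ implies that $l^{1}$ is complemented in $X^{\prime}$.
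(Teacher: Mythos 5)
Your proof is correct, but it takes a genuinely different route from the paper's. You argue by contrapositive using only elementary duality and the projectivity (lifting property) of $l^{1}$: if $c_{0}\hookrightarrow X$ then $l^{1}=c_{0}^{\prime}$ is a quotient of $X^{\prime}$ and lifting the identity yields a (complemented) copy of $l^{1}$ in $X^{\prime}$; if $l^{1}\hookrightarrow X$ then $l^{\infty}$ is a quotient of $X^{\prime}$, and lifting a fixed embedding $l^{1}\hookrightarrow l^{\infty}$ through that quotient map plants $l^{1}$ in $X^{\prime}$. Both lifts are bounded below because composing with the quotient map recovers a map that is already bounded below, so the argument is sound; the only ingredients are Hahn--Banach, the open mapping theorem, and the embeddability of separable spaces into $l^{\infty}$. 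The paper instead quotes two substantial cited theorems: Bessaga--Pe\l czy\'nski (applied to the pair $X^{\prime}$, $X^{\prime\prime}$: a copy of $c_{0}$ in $X^{\prime\prime}$ would force a complemented $l^{1}$ in $X^{\prime}$, so $X\subset X^{\prime\prime}$ has no $c_{0}$) and Hagler's theorem ($l^{1}\hookrightarrow X$ iff $L^{1}[0,1]\hookrightarrow X^{\prime}$). Your approach is more self-contained and proves exactly what is needed, whereas the paper's route is shorter on the page but leans on deeper results (Hagler's theorem in particular delivers the much stronger conclusion that $X^{\prime}$ contains $L^{1}[0,1]$). One small quibble: the Bessaga--Pe\l czy\'nski theorem you cite at the end is usually stated as ``$c_{0}\hookrightarrow X^{\prime}$ implies $l^{1}$ is complemented in $X$''; the implication you want, ``$c_{0}\hookrightarrow X$ implies $l^{1}$ is complemented in $X^{\prime}$,'' is the easier companion fact, and it is precisely what your lifting argument establishes, so nothing is lost.
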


\begin{proof}
The proof depends on two well known main results of this topic. A result of
Bessaga and Pelczynski states that if $X^{\prime}$ contains a copy of $c_{0}$
then $X$ contains a complemented copy of $l^{1}$ \cite{BP},\cite[2.e.8, p.
103]{LiT}. Since we assume that $X^{\prime}$ does not contain any copy of
$l^{1},$ $X^{\prime\prime}$ does not contain any copy of $c_{0}.$ Then, as a
subspace of $X^{\prime\prime},$ $X$ does not contain any copy of $c_{0}.$ On
the other hand, a result of Hagler states that $X$ contains a copy of $l^{1}$
if and only if $X^{\prime}$ contains a copy of $L^{1}[0,1]$ \cite{Hag}. Since
$L^{1}[0,1]$ contains copies of $l^{1}$, our assumption on $X^{\prime}$
implies that $X^{\prime}$ does not contain any copy of $L^{1}[0,1]$ and
therefore $X$ does not contain any copy of $l^{1}$.
\end{proof}

In the proofs of Theorem \ref{t1} and Theorem \ref{t2} we will use ultranets.
We will briefly review the definition of an ultranet and some basic facts that
we will use concerning ultranets. A net $\{x_{\lambda}\}$ in a set $F$ is
called an ultranet if for each subset $G$ of $F$, $\{x_{\lambda}\}$ is either
eventually in $G$ or eventually in $F\diagdown G$ \cite[11.10, p. 76]{W}. If
$\{x_{\lambda}\}$ is an ultranet and $f:F\rightarrow H$ is a function then
$\{f(x_{\lambda})\}$ is an ultranet in $H$ \cite[11.11, p. 76]{W}. Every net
has a subnet that is an ultranet and every subnet of an ultranet is an
ultranet \cite[11B, p. 77]{W}. Finally, every ultranet in a compact Hausdorff
space converges \cite[17.4, p.118]{W}.

\section{Proof of the main result}

We are now ready to give a proof of Theorem \ref{t1}.

\begin{proof}
When $X$ is reflexive then $X^{\prime}$ is reflexive and therefore $X^{\prime
}$ can not contain any copy of $l^{1}.$ That is (1) implies (3). Lemma
\ref{L3} yields that (3) implies (2). It is clear that (2) implies (4). Also,
since for each $x\in X$\ \ the cyclic subspace $X(x)$ may be represented as a
Banach lattice, Lozanovsky's Theorem implies that (4) if and only if (5).
Hence the proof will be complete if we show (4) implies (1).

Initially observe that by (4) each cyclic subspace of $X$ does not contain any
copy of $c_{0}.$ Then, as noted in Section 2, $m(C(K))$ restricted to the
cyclic subspace has its weak operator closure generated by a Bade complete
Boolean algebra of projections. Hence for each $x\in X$, the map
$m(C(K))\longrightarrow X$ defined by $a\rightarrow ax$ is a weakly compact
operator. Therefore, if we assume without loss of generality that $m(C(X))$ is
weak operator closed in $L(X)$, then, again as noted in Section 2, we have
that $m(C(K))$ is generated by a Bade complete Boolean algebra of projections.
This means that $K$ is hyperstonian and the idempotents in $C(K)$ (which
correspond to the characteristic functions of the clopen subsets of $K$ ) form
the Bade complete Boolean algebra of projections that generate $m(C(K))$ on
$X.$

Now we prove (4) implies (1) by induction on the number of generators of $X.$
Suppose $X$ is generated by one element $x_{0}\in X$. Then the cyclic space
$X=X(x_{0})$ may be represented as a Banach lattice. Hence, (4) and
Lozanovsky's Theorem imply that $X$ is reflexive. Now suppose that whenever a
finitely generated Banach $C(K)$-module has $r\geq1$ generators and satisfies
(4) then it is reflexive. Suppose $X$ is a Banach $C(K)$-module with $r+1$
generators and satisfies (4). Let $\{x_{0},x_{1},\ldots,x_{r}\}$ be a set of
generators of $X.$ Let $Y$ be the closed submodule of $X$ generated by
$\{x_{1},x_{2}\ldots,x_{r}\}.$ Then $Y$ satisfies (4) and therefore $Y$ is
reflexive by the induction hypothesis. Since we have that $m(C(K))$ is
generated by a Bade complete Boolean algebra of projections on $X$, Lemma
\ref{L1} implies that the same is true for the quotient $X/Y.$ Note that since
$X/Y$ is a cyclic space generated by $[x_{0}]$, it may be represented as a
Banach lattice such that $[x_{0}]$ is a quasi-interior point and the ideal
center $Z(X/Y)=m(C(K))_{|X/Y}$. That $m(C(K))$ is generated by a Bade complete
Boolean algebra of projections on $X/Y$ implies that as a Banach lattice $X/Y$
has order continuous norm. This means in particular that each band in $X/Y$ is
the range of a band projection. Suppose the Banach lattice $X/Y$ is not
reflexive then, by Lotz's Theorem , $X/Y$ must contain a copy of either
$l^{1}$ or $c_{0}$ as a sublattice \cite{Lot}.

First, assume that there is a sublattice of $X/Y$ that is lattice isomorphic
to $l^{1}.$ Let $\{e_{n}\}$ be a sequence in $X$ such that $\{[e_{n}]\}$
corresponds to the basic sequence of $l^{1}$ in the sublattice of $X/Y$ that
is lattice isomorphic to $l^{1}.$ That is $\{[e_{n}]\}$ is a pairwise disjoint
positive sequence in $X/Y$ such that for some $0<d<D$ we have for each
$(\xi_{n})\in l^{1}$%
\[
d%
{\textstyle\sum}
|\xi_{n}|\leq\Vert%
{\textstyle\sum}
\xi_{n}[e_{n}]\Vert\leq D%
{\textstyle\sum}
|\xi_{n}|.
\]
Here $%
{\textstyle\sum}
\xi_{n}[e_{n}]$ represents the limit of the Cauchy sequence given by the
partial sums of the series in $X/Y.$ Let $\chi_{n}\in C(K)$ be the band
projection onto the band generated by $[e_{n}]$ in $X/Y$. Since the elements
of the sequence $\{[e_{n}]\}$ are disjoint in $X/Y$, the elements of the
sequence of band projections $\{\chi_{n}\}$ are also disjoint as idempotents
in $C(K)$. Since $[e_{n}]=\chi_{n}[e_{n}]=[\chi_{n}e_{n}],$ without loss of
generality, we assume that $e_{n}=\chi_{n}e_{n}$ for each $n$. Furthermore
again without loss of generality we may assume that for some $\varepsilon>0$
and for each $n,$ we have $\Vert e_{n}\Vert\leq D(1+\varepsilon)$ in $X$. Now
we have for each $(\xi_{n})\in l^{1}$ and positive integer $N$,%
\[
d%
{\textstyle\sum_{n=1}^{N}}
|\xi_{n}|\leq\Vert%
{\textstyle\sum_{n=1}^{N}}
\xi_{n}[e_{n}]\Vert\leq\Vert%
{\textstyle\sum_{n=1}^{N}}
\xi_{n}e_{n}\Vert\leq%
{\textstyle\sum_{n=1}^{N}}
|\xi_{n}|\Vert e_{n}\Vert\leq D(1+\varepsilon)%
{\textstyle\sum}
|\xi_{n}|.
\]
By passing to the limit, we have%
\[
d%
{\textstyle\sum}
|\xi_{n}|\leq\Vert%
{\textstyle\sum}
\xi_{n}e_{n}\Vert\leq D(1+\varepsilon)%
{\textstyle\sum}
|\xi_{n}|
\]
where $%
{\textstyle\sum}
\xi_{n}e_{n}$ is now the limit of the Cauchy sequence given by the partial
sums of the series in $X.$ Hence $X$ has a subspace that is isomorphic to
$l^{1}.$ Let $y=%
{\textstyle\sum}
\frac{1}{2^{n}}e_{n}.$ Then $\chi_{n}y=$ $\frac{1}{2^{n}}e_{n}$ for each $n.$
That is, the subspace of $X$ that is isomorphic to $l^{1}$ is contained in the
cyclic subspace $X(y)$ of $X.$ This contradicts (4) and thus $X/Y$ does not
contain a copy of $l^{1}$ as a sublattice.

It follows that if the Banach lattice $X/Y$ is not reflexive, it must contain
a copy of $c_{0}$ as a sublattice. Let $\{e_{n}\}$ be a sequence in $X$ such
that $\{[e_{n}]\}$ corresponds to the basic sequence of $c_{0}$ in the
sublattice of $X/Y$ that is lattice isomorphic to $c_{0}.$ Let $\chi_{n}\in
C(K)$ be the band projection onto the band generated by $[e_{n}]$ in $X/Y$.
Since the elements of the sequence $\{[e_{n}]\}$ are disjoint in $X/Y$, the
elements of the sequence of band projections $\{\chi_{n}\}$ are also disjoint
as idempotents in $C(K)$. Since $[e_{n}]=\chi_{n}[e_{n}]=[\chi_{n}e_{n}],$
without loss of generality, we assume that $e_{n}=\chi_{n}e_{n}$ for each $n$.
We will assume that $0<d<D$ are the constants that give the lattice
isomorphism of $c_{0}$ into $X/Y$. That is
\[
d(\sup|\xi_{n}|)\leq||\sum\xi_{n}[e_{n}]||\leq D(\sup|\xi_{n}|)
\]
when $(\xi_{n})\in c_{0}$ where $\sum\xi_{n}[e_{n}]$ denotes the limit of the
Cauchy sequence given by the partial sums of the series in $X/Y$. Let
$z_{n}=e_{1}+e_{2}\ldots+e_{n}$ and $\zeta_{n}=\chi_{1}+\chi_{2}+\ldots
+\chi_{n}$ for each $n=1,2,\ldots$. Let $G_{n}=\{y\in Y:||z_{n}+y||\leq D\}$
for each $n.$ By Lemma \ref{L2}, $G_{n}$ is a non-empty, convex and weakly
compact subset of $Y.$ Let $H_{n}=\zeta_{n}G_{n}.$ Clearly, when $n\geq m$,
$\zeta_{m}z_{n}=z_{m}.$ Then, for each $y\in G_{n},$
\[
||z_{n}+\zeta_{n}y||=||\zeta_{n}(z_{n}+y)||\leq||z_{n}+y||\leq D.
\]
It follows that, for each $n,$ $H_{n}\subset G_{n}$ and that $H_{n}$ is itself
a non-empty, convex and weakly compact subset of $Y.$ We choose a sequence
$\{y_{i}\}$ in $Y$ with $y_{i}\in H_{i}.$ It follows from above that for each
$n,$ the sequence $\{\zeta_{n}y_{i}\}_{i\geq n}^{\infty}$ is in $H_{n}.$
Namely, if $i\geq n$ then%
\[
||z_{n}+\zeta_{n}y_{i}||=||\zeta_{n}(z_{i}+y_{i})||\leq||z_{i}+y_{i}||\leq D.
\]
Now let $\{i_{\alpha}\}_{\alpha\in\Gamma}$ be an ultranet that is a subnet of
the sequence of positive integers $\{i\}$ and for each $n$ let $\Gamma
_{n}=\{\alpha\in\Gamma:i_{\alpha}\geq n\}.$ Then $\{\zeta_{n}y_{i_{\alpha}%
}\}_{\alpha\in\Gamma_{n}}$ is an ultranet that is a subnet of the sequence
$\{\zeta_{n}y_{i}\}_{i\geq n}^{\infty}$ in $H_{n}.$ Since $H_{n}$ is weakly
compact, the ultranet $\{\zeta_{n}y_{i_{\alpha}}\}_{\alpha\in\Gamma_{n}}$
converges weakly to some $w_{n}\in H_{n}.$ Since for any positive integers
with $n$ $\geq m$, we have $\Gamma_{n}\subset\Gamma_{m}$ it follows from the
definition of the sequences that $\zeta_{m}w_{n}=w_{m}$ whenever $n\geq m.$ In
particular, since $\zeta_{n-1}w_{n}=w_{n-1},$ we have $w_{n}=\chi_{n}%
w_{n}+w_{n-1}$ for all $n\geq2.$ Hence, by induction, we have that $w_{n}%
=\chi_{1}w_{1}+\chi_{2}w_{2}\ldots+\chi_{n}w_{n}$ for all positive integers
$n.$ Define a sequence $\{u_{n}\}$ in $X$ such that $u_{n}=e_{n}+\chi_{n}%
w_{n}$ for each $n.$ Clearly
\[
d\leq||[e_{n}]||\leq||u_{n}||
\]
for each $n.$ Also, since $u_{1}+u_{2}\ldots+u_{n}=z_{n}+w_{n}$ we have%
\[
\Vert u_{n}\Vert\leq||u_{1}+u_{2}\ldots+u_{n}||=||z_{n}+w_{n}||\leq D
\]
for each $n.$ Then it follows that the closed subspace spanned by $\{u_{n}\}$
in $X$ is lattice isomorphic to $c_{0}.$ To see this, let $u=\sum
_{n=1}^{\infty}\frac{u_{n}}{2^{n}}$ in $X$ and consider the cyclic subspace
$X(u)$ of $X.$ We have that $X(u)$ is a Banach lattice with quasi-interior
point $u$ and ideal center $Z(X(u))=m(C(K))_{|X(u)}$. Since $\chi_{n}%
u=\frac{u_{n}}{2^{n}}$ for each $n,$ $\{u_{n}\}$ is a pairwise disjoint
positive sequence in the Banach lattice $X(u).$ Then the closed sublattice of
$X(u)$ generated by $\{u_{n}\}$ is lattice isomorphic to $c_{0}$ , this
follows directly from the $Z(X(u))$ module structure of $X(u)$, but also it
follows by \cite[ 2.3.10, p. 82]{MN}. This contradicts the assumption (4).
Hence, the Banach lattice $X/Y$ cannot contain a copy of $c_{0}$ as a
sublattice, as well as not containing a copy of $l^{1}$ as a sublattice. Then
Lotz's refinement of Lozanovsky's Theorem implies that $X/Y$ is reflexive.
Since by induction hypothesis $Y$ is reflexive, we have that $X$ is also
reflexive. The reader will observe that this is the familiar three space
property of reflexivity. Therefore (4) implies (1).
\end{proof}

One of the questions naturally connected with the statement of
Theorem~\ref{t1} is the following one. Can we substitute condition $(5)$ of
this theorem with a weaker condition that for some set $\{z_{1}, \ldots,
z_{n}\}$ of generators of the $C(K)$-module $X$ the cyclic subspaces
$X(z_{i}), i=1, \ldots, n$ are reflexive? The two examples below show that the
answer to this question in general is negative.

\begin{example}
\label{e1} Let $E=L^{2}(0,1)\oplus L^{1}(0,1)\oplus L^{1}(0,1)$ be an
$L^{\infty}(0,1)$-module with the norm $\Vert(f,g,h)\Vert=\Vert f\Vert
_{2}+\Vert g\Vert_{1}+\Vert h\Vert_{1}$ for all $f\in L^{2}(0,1)$ and $g,h\in
L^{1}(0,1)$. The module structure is carried over coordinatewise from the
usual $L^{\infty}(0,1)$-module structures of $L^{2}(0,1)$ and $L^{1}(0,1)$
that are given by almost everywhere pointwise multiplication.

Let $X = \{(f,g,h) \in E : f+g + h =0\}$. Then $X$ is a closed submodule of
$E$ that has two generators. It is easy to see for example that $\{(-1,1,0),
(1, 0, -1)\}$ generates $X$ where $1$ is the identity in $L^{\infty}(0,1)$.
Then $X((1,-1,0))$ and $X((1,0,-1))$ are both isomorphic to $L^{2}(0,1)$. Thus
the cyclic subspace generated by either of these vectors is reflexive.

But $X$ itself is not reflexive. Indeed $(0,1,-1) \in X$ and $X((0,1,-1))$ is
isomorphic to $L^{1}(0,1)$ and hence is not reflexive. Moreover, if we use the
set of generators $\{(1,0,-1), (0,1,-1)\}$, it is straightforward to see that
$X$ is isomorphic (but not isometric) to $L^{2}(0,1) \oplus L^{1}(0,1)$.
\end{example}

Now we will provide an even simpler example of a nonreflexive Banach
$C(K)$-module $X$ which is the direct sum of two cyclic subspaces but also has
two generators such that the corresponding cyclic subspaces are reflexive.

\begin{example}
\label{e2} Let $X=c_{0}\oplus l^{2}$ be a Banach $l^{\infty}$-module with the
norm $\Vert(x,y)\Vert=\Vert x\Vert_{\infty}+\Vert y\Vert_{2}$, for all $x\in
c_{0}$ and $y\in l^{2}$. Like in Example~\ref{e1} the $l^{\infty}$-module
structure is carried over coordinatewise from the module structure of $c_{0}$
and $l^{2}$. It is straightforward to observe that if $x_{n}=1/n,n\in
\mathds{N}$ then $x\in c_{0}\cap l^{2}$ and the vectors $(x,x)$ and $(0,x)$
generate $X$. Moreover, $X((x,x))$ is isomorphic to $l^{2}$ and
$X((0,x))=l^{2}$. Hence once again $X$ is not reflexive but has a pair of
generators such that each of them generates a reflexive cyclic subspace.
\end{example}

\section{Boolean algebras of projections of finite multiplicity}

In this section we will consider extending the conclusions of Theorem \ref{t1}
to the case where the Banach $C(K)$-module may be infinitely generated while
staying close to being finitely generated. Let $X$ be a Banach $C(K)$-module
where $K$ is hyperstonian and the homomorphism $m$ is weak* to weak-operator
continuous. That is, as discussed in Section 2, $\mathcal{B}$ (the set of
idempotents in $m(C(K))$) is a Bade complete Boolean algebra of projections on
$X$. \emph{Throughout this section we will assume that} $X$ \emph{is a Banach
space and} $\mathcal{B}$ \emph{is a Bade complete Boolean algebra of
projections on} $X$. Then $\mathcal{B}$ is said to be of \emph{uniform
multiplicity} $\emph{n}$ if there exists a disjoint family of projections
$\{\chi_{\alpha}\}$ in $\mathcal{B}$ such that for each projection $\chi
\in\mathcal{B}$ with $\chi\chi_{\alpha}=\chi$ one has $\chi X$ is generated by
a minimum of $n$ elements in $\chi X$ , and also $\sup\chi_{\alpha}=1$ in
$\mathcal{B}$ \cite{B2}, \cite[XVIII.3.1, p. 2264 and XVIII.3.6, p. 2267]{DS}.

If $\mathcal{B}$ is of uniform multiplicity one then it was shown by Rall
\cite{Rl}, that $X$ is represented as a Banach lattice with order continuous
norm and its ideal center is $m(C(K))$ (for a proof, see \cite[Lemma 2]{O2}).
The only difference from the cyclic case is that instead of a quasi-interior
element one has a topological orthogonal system \cite[III.5.1, p. 169]{Sch}.
(For example consider $l^{2}(\Gamma)$ when $\Gamma$ is an uncountable discrete
set.) To prove our next result, Corollary \ref{c1} , we need the following lemma.

\begin{lemma}
\label{l4}Let $X$ be a Banach space and let $\mathcal{B}$ be a Bade complete
Boolean algebra of projections on $X$ that is of uniform multiplicity one.
Then $X$ is reflexive if and only if no cyclic subspace of $X$ contains a copy
of either $l^{1}$ or $c_{0}$.
\end{lemma}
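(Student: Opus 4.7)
My plan is to combine the structure theorem of Rall quoted immediately before the lemma with Lotz's refinement of Lozanovsky's theorem. Rall's theorem tells us that under our hypotheses $X$ admits a Banach lattice structure with order continuous norm whose ideal center coincides with $m(C(K))$, and Lotz's theorem then reduces reflexivity of $X$ to the absence of sublattices of $X$ isomorphic to $l^{1}$ or $c_{0}$. The forward implication of the lemma is immediate: reflexivity of $X$ passes to every closed subspace, so none can contain $l^{1}$ or $c_{0}$.

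For the converse, suppose $X$ is not reflexive and, by Lotz's theorem, fix a positive disjoint sequence $\{e_{n}\}$ in the lattice $X$ whose closed sublattice is lattice isomorphic to $l^{1}$ or $c_{0}$; in either case $\Vert e_{n}\Vert$ is bounded above and below. Let $\chi_{n}\in\mathcal{B}$ be the band projection onto the band generated by $e_{n}$; disjointness of $\{e_{n}\}$ forces $\{\chi_{n}\}$ to be pairwise disjoint, and $\chi_{n}e_{n}=e_{n}$. Put
\[
y=\sum_{n=1}^{\infty}\frac{e_{n}}{2^{n}\Vert e_{n}\Vert},
\]
which converges absolutely in $X$. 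Then $\chi_{n}y=2^{-n}\Vert e_{n}\Vert^{-1}e_{n}$, so each $e_{n}$ is a scalar multiple of the vector $\chi_{n}y\in X(y)$. Consequently the closed linear span of $\{e_{n}\}$ in $X$ — which is isomorphic to $l^{1}$ or to $c_{0}$, respectively — is contained in the cyclic subspace $X(y)$, contradicting the hypothesis that no cyclic subspace of $X$ contains a copy of $l^{1}$ or $c_{0}$.

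The reason this argument is much shorter than the corresponding $c_{0}$-case in the proof of Theorem \ref{t1} is that the sublattice $\{e_{n}\}$ already lives in $X$ itself, so no lifting from a quotient is required and the ultranet construction of that proof can be bypassed entirely. I do not foresee a serious obstacle; the only points worth double-checking are that Rall's structure theorem applies verbatim to our hypotheses (it does, since $\mathcal{B}$ is Bade complete and of uniform multiplicity one) and that the band projection associated with the order-theoretic disjointness of $\{e_{n}\}$ in the lattice really lies in $\mathcal{B}$, which follows from the fact that the ideal center of $X$ equals $m(C(K))$ and $\mathcal{B}$ supplies all band projections in an order continuous Banach lattice of this form.
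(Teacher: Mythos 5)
Your proposal is correct and follows essentially the same route as the paper's own proof: invoke Rall's representation of $X$ as an order continuous Banach lattice with $\mathcal{B}$ the band projections, apply Lotz's theorem to extract a disjoint norm-bounded sequence spanning a sublattice copy of $l^{1}$ or $c_{0}$, and then use the weighted sum $y=\sum 2^{-n}\Vert e_{n}\Vert^{-1}e_{n}$ together with the band projections $\chi_{n}\in\mathcal{B}$ to trap that sublattice inside the single cyclic subspace $X(y)$. The only cosmetic difference is your normalization by $\Vert e_{n}\Vert$ in the defining series, which the paper omits since the sequence is already norm bounded.
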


\begin{proof}
It is necessary to prove only one direction. Namely, suppose $X$ has no cyclic
subspace that contains a copy of either $l^{1}$ or $c_{0}$. By Rall's Theorem,
$X$ is represented as a Banach lattice with order continuous norm and
$\mathcal{B}$ corresponds to the Boolean algebra of band projections of the
Banach lattice $X$. Since $\mathcal{B}$ is of uniform multiplicity one on $X$,
it is well known that, for each $x\in X$, there exists $e_{x}\in\mathcal{B}$
such that $X(x)=e_{x}X$ (e.g., ~\cite[XVIII.3.6, p. 2267]{DS}). Thus $X(x)$ is
the band generated by $x$ in $X$. Suppose $X$ is not reflexive. Then by Lotz's
Theorem there exists a sublattice of $X$ that is lattice isomorphic either to
$l^{1}$ or $c_{0}$. Let $\{u_{n}\}$ be the norm bounded sequence of pairwise
disjoint elements of $X$ that corresponds to the basic sequence of either
$l^{1}$ or $c_{0}$ under this lattice isomorphism. Let $u=\sum\frac{u_{n}%
}{2^{n}}\in X$. Let $e_{n}$ be the band projection onto the band generated by
$u_{n}$ in $X$. Since the elements $u_{n}$ are disjoint, $e_{n}u=\frac{u_{n}%
}{2^{n}}$ for each $n$. Therefore the sublattice of $X$ generated by
$\{u_{n}\}$ is contained in $X(u)$. This contradicts the assumption that no
cyclic subspace of $X$ contains a copy of either $l^{1}$ or $c_{0}$.
\end{proof}

\begin{remark}
\label{r2} The special case of Lemma~\ref{l4} when $X$ is cyclic was obtained
by Tzafriri in~\cite{Tz2}.
\end{remark}

Then the methods of Theorem \ref{t1} yield the following corollary.

\begin{corollary}
\label{c1}Let $X\mathcal{\ }$be a Banach space and let $\mathcal{B}$ be a Bade
complete Boolean algebra of projections on $X$ that is of uniform multiplicity
$n$. Then the conditions (1)-(5) of Theorem \ref{t1} are equivalent.
\end{corollary}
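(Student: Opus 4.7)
The plan is to mirror the proof of Theorem \ref{t1}, replacing the induction on the number of generators with an induction on the uniform multiplicity $n$. The implications (1) $\Rightarrow$ (3) $\Rightarrow$ (2) $\Rightarrow$ (4) and (4) $\Leftrightarrow$ (5) carry over verbatim from Theorem \ref{t1}: duality gives (1) $\Rightarrow$ (3), Lemma \ref{L3} gives (3) $\Rightarrow$ (2), (2) $\Rightarrow$ (4) is trivial, and (4) $\Leftrightarrow$ (5) follows from the Banach lattice representation of each cyclic subspace combined with Lozanovsky's theorem. All the substance lies in (4) $\Rightarrow$ (1).

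The base case $n = 1$ is precisely Lemma \ref{l4}. For the inductive step, assume the result holds for uniform multiplicities up to $n$, and suppose $\mathcal{B}$ has uniform multiplicity $n + 1$ on $X$, with Bade decomposition $\{\chi_{\alpha}\}$ such that each $\chi_{\alpha} X$ is finitely generated by exactly $n + 1$ elements $x_{\alpha}^{1}, \ldots, x_{\alpha}^{n+1}$. I would define a closed submodule $Y \subset X$ so that $\chi_{\alpha} Y$ coincides with the closed submodule of $\chi_{\alpha} X$ generated by $x_{\alpha}^{1}, \ldots, x_{\alpha}^{n}$. Lemma \ref{L1} then guarantees that $\mathcal{B}$ remains Bade complete on both $Y$ and $X/Y$; moreover its restriction to $Y$ has uniform multiplicity at most $n$, while its restriction to $X/Y$ has uniform multiplicity $1$, since $[x_{\alpha}^{n+1}]$ generates $\chi_{\alpha}(X/Y)$ cyclically for each $\alpha$.

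Condition (4) passes to $Y$ trivially (cyclic subspaces of $Y$ are cyclic subspaces of $X$), so the inductive hypothesis gives that $Y$ is reflexive. For $X/Y$, I would verify (4) by contradiction, adapting the lifting argument of Theorem \ref{t1}: if some cyclic subspace of $X/Y$ contained a copy of $l^{1}$ or $c_{0}$, then since $X/Y$ has uniform multiplicity one, it is a Banach lattice with order continuous norm by Rall's theorem, and Lotz's refinement of Lozanovsky would produce a disjoint bounded sequence in $X/Y$ spanning a sublattice lattice-isomorphic to $l^{1}$ or $c_{0}$. Using the reflexivity of $Y$ together with Lemma \ref{L2} and the ultranet construction from the proof of Theorem \ref{t1}, this sublattice lifts to a cyclic subspace of $X$ containing a copy of $l^{1}$ or $c_{0}$, contradicting (4). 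Then Lemma \ref{l4} yields reflexivity of $X/Y$, and the three-space property of reflexivity concludes that $X$ is reflexive.

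The main obstacle is the careful construction and analysis of $Y$: one must verify that patching the finitely generated pieces $\chi_{\alpha} Y$ across the (possibly uncountable) Bade decomposition yields a genuinely closed submodule of $X$, and that the quotient $X/Y$ really achieves uniform multiplicity $1$ (rather than accidentally collapsing to a smaller piece) so that Lemma \ref{l4} applies to it. Once this structural setup is in place, the ultranet lifting argument and the three-space step transfer directly from Theorem \ref{t1}.
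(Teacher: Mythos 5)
Your proposal matches the paper's own proof, which simply observes that the implications (1)$\Rightarrow$(3)$\Rightarrow$(2)$\Rightarrow$(4) and (4)$\Leftrightarrow$(5) carry over from Theorem~\ref{t1}, that Lemma~\ref{l4} settles the base case $n=1$ of (4)$\Rightarrow$(1), and that the inductive step proceeds exactly as in Theorem~\ref{t1}. The structural points you flag as the main obstacle (patching the submodule $Y$ across the Bade decomposition and checking that $X/Y$ has uniform multiplicity one so that Rall's theorem and the lifting argument apply) are left implicit in the paper as well, so your version is, if anything, more explicit than the original.
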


\begin{proof}
The part of the proof up to (4) implies (1) is as in the proof of Theorem
\ref{t1} and is clear. In the proof of (4) implies (1), Lemma~\ref{l4} shows
the case $n=1$ is true. The rest of the proof follows by induction on $n$ just
as in the proof of Theorem \ref{t1}.
\end{proof}

We say $\mathcal{B}$ is of \emph{finite multiplicity} on $X$ if there exists a
disjoint family of projections $\{\chi_{\alpha}\}$ in $\mathcal{B}$ such that
$\chi_{\alpha}X$ is generated by a minimum of $n_{\alpha}$ elements in
$\chi_{\alpha}X$ and $\sup\chi_{\alpha}=1$ in $\mathcal{B}$. Then it follows
by a result of Bade that there exists a sequence $\{e_{n}\}$ of disjoint
projections in $\mathcal{B}$ such that $e_{n}\mathcal{B}$ is of uniform
multiplicity $n$ on $e_{n}X$ and $\sup e_{n}=1$ \cite[XVIII.3.8, p. 2267]{DS}.
In such a case it is clear that $X=cl(span\{e_{n}X:n=1,2,\ldots\})$.

In the proof of Theorem \ref{t2} we will need some additional properties of
the spaces involved that we have not used so far. In the discussion that
follows we will introduce these.

Given a Banach $C(K)$-module $X$, its dual $X^{\prime}$ has a natural Banach
$C(K)$-module structure induced by the module structure of $X$. Let
$m^{\prime}:C(K)\rightarrow\mathcal{L}(X^{\prime})$ be defined by $m^{\prime
}(a)=(m(a))^{\prime}$ (here the right hand side of the equality denotes the
adjoint of $m(a)$ in $\mathcal{L}(X^{\prime})$), for each $a\in C(K)$. It is
clear that $m^{\prime}$ is a contractive unital algebra homomorphism. When $m$
is one-to-one, $m^{\prime}$ is also one-to-one. Furthermore to distinguish
between the module action of $C(K)$ on $X^{\prime}$ and on $X$, we will define
$a^{\prime}f:=m^{\prime}(a)(f)$ for each $a\in C(K)$ and $f\in X^{\prime}$.
Then
\[
a^{\prime}f(x)=f(ax)
\]
for all $a\in C(K)$, $x\in X$, and $f\in X^{\prime}$. In particular for an
idempotent $\chi\in\mathcal{B}\subset C(K)$, we have%
\[
\chi^{\prime}f(x)=\chi^{\prime}f(\chi x)=f(\chi x)
\]
for all $x\in X,$ and $f\in X^{\prime}$.

In this section we assumed that $\mathcal{B}$ is a Bade complete Boolean
algebra of projections on $X$. Therefore $K$ is hyperstoinian. So in
particular $K$ is a Stonian compact Hausdorff space. Let $U$ be a dense open
subset of $K$ and let $f:U\rightarrow\mathbb{C}$ be a complex-valued, bounded
continuous function. Then $f$ has a unique extension $\widetilde{f}\in C(K)$
such that $\Vert\widetilde{f}\Vert=\underset{t\in U}{\sup}|f(t)|$ \cite[15G,
p. 106]{W}. Suppose $\{\zeta_{n}\}$ is a pairwise disjoint sequence of
idempotents in $\mathcal{B}$ such that $\sup\zeta_{n}=1$. Then we can embed
$l^{\infty}$ into $C(K)$ by means of an isometric unital algebra homomorphism.
To see this let $U=\{t\in K:\zeta_{n}(t)=1$ for some $n\}$. Then $U$ is an
open dense subset of $K$. Suppose $(a_{n})\in l^{\infty}$ and define
$f:U\rightarrow\mathbb{C}$ by%
\[
f(t)=a_{n}%
\]
whenever $\zeta_{n}(t)=1$ for some $t\in U$. Clearly $f$ is a bounded
continuous function. We will denote its unique extension in $C(K)$ by $%
{\textstyle\sum}
a_{n}\zeta_{n}$. It is clear that $\Vert%
{\textstyle\sum}
a_{n}\zeta_{n}\Vert=\Vert(a_{n})\Vert_{\infty}$ and the map $(a_{n}%
)\rightarrow%
{\textstyle\sum}
a_{n}\zeta_{n}$ gives the embedding of $l^{\infty}$ in $C(K)$ with the
required properties.

Finally we recall that a Banach lattice $X$ is called a KB-space, if any norm
bounded increasing non-negative sequence in $X$ has a least upper bound and
the sequence converges to its least upper bound in norm. It is well known that
$X$ is a KB-space if and only if no sublattice of $X$ is a copy of $c_{0}$
\cite[2.4.12, p. 92]{MN}, \cite[II.5.15, p. 95]{Sch}. It is evident from
Lozanovsky's Theorem that a reflexive Banach lattice is a KB-space.

\begin{theorem}
\label{t2} Let $X$ be a Banach space and let$\mathcal{\ B}$ be a Bade complete
Boolean algebra of projections on $X$ that is of finite multiplicity. Then the
conditions (1)-(5) of Theorem \ref{t1} are equivalent.

\begin{proof}
Once again, (1) implies (3), (3) implies (2), (2) implies (4), and (4) if and
only if (5) are clear. We need to show (4) implies (1). Suppose there exists a
positive integer $n$ such that $e_{m}=0$ for all $m>n$. Then, by the Corollary
\ref{c1}, $X$ is the direct sum of a finite collection of reflexive Banach
spaces. Hence $X$ is reflexive. Therefore, without loss of generality, assume
that $e_{n}\neq0$ for all $n$. Note that , by (4) and the Corollary \ref{c1},
we have that the subspace $e_{n}X$ is reflexive for each $n$. We need to show
that this extends to $X$ . For each $n$, let $\chi_{n}=e_{1}+e_{2}%
+\ldots+e_{n}\in\mathcal{B}$. It is clear that $\chi_{n}X$ is also reflexive.

Initially motivated by an idea of James \cite[Lemma 2]{J}, we will prove the
following: (*) Suppose that each cyclic subspace of $X$ does not contain any
copy of $l^{1}$, then $\parallel f-\chi_{n}^{\prime}f\parallel\rightarrow0$
for all $f\in X^{\prime}$.

For some $f\in X^{\prime}$ with $\Vert f\Vert=1$, suppose that (*) is false.
Since $\{\chi_{n}x\}$ converges to $x$ in norm for all $x\in X$, we have
$\{\chi_{n}^{\prime}f\}$ converges to $f$ in the weak*-topology in $X^{\prime
}$. Therefore that $\{\chi_{n}^{\prime}f\}$ does not converge to $f$ in norm
implies that $\{\chi_{n}^{\prime}f\}$ does not converge in norm in $X^{\prime
}$. That is, $\{\chi_{n}^{\prime}f\}$ is not a Cauchy sequence. Hence there
exists an $\varepsilon>0$ and a subsequence $\{\chi_{k(n)}\}$ such that
\[
\Vert\chi_{k(n+1)}^{\prime}f-\chi_{k(n)}^{\prime}f\Vert\geq\varepsilon
\]
for all $n$ and $\sup\chi_{k(n)}=1$. We define a disjoint sequence of
idempotents $\{\zeta_{n}\}$ in $\mathcal{B}$ with $\sup\zeta_{n}=1$ such that
(i) $\zeta_{1}=\chi_{k(2)}$, (ii) $\zeta_{n}=\chi_{k(n+1)}-\chi_{k(n)}$ for
$n\geq2$, and (iii) $\Vert\zeta_{n}^{\prime}f\Vert\geq\varepsilon$ for all
$n$. This implies that $X^{\prime}$ contains a copy of $l^{\infty}$. In fact
if $(\alpha_{n})\in l^{\infty}$, then, as defined above, $%
{\textstyle\sum}
\alpha_{n}\zeta_{n}\in C(K)$. Hence%
\[
\varepsilon|\alpha_{k}|\leq\Vert\alpha_{k}\zeta_{k}^{\prime}f\Vert=\Vert
\zeta_{k}^{\prime}(%
{\textstyle\sum}
\alpha_{n}\zeta_{n})^{\prime}f\Vert\leq\Vert(%
{\textstyle\sum}
\alpha_{n}\zeta_{n})^{\prime}f\Vert
\]
for each $k$ and
\[
\varepsilon\Vert(\alpha_{n})\Vert_{\infty}\leq\Vert(%
{\textstyle\sum}
\alpha_{n}\zeta_{n})^{\prime}f\Vert\leq\Vert(\alpha_{n})\Vert_{\infty}%
\]
for all $(\alpha_{n})\in l^{\infty}$. Clearly $\{\zeta_{n}^{\prime}f\}$
corresponds to the standard basis of $c_{0}$ and $f$ corresponds to the unit
of $l^{\infty}$. Now in a standard manner one can show that $X$ contains a
copy of $l^{1}$. Namely, for each $n$, we use (iii) to find $x_{n}\in\zeta
_{n}X$ such that $\Vert x_{n}\Vert=1$ and $f(x_{n})=\zeta_{n}^{\prime}%
f(x_{n})>\frac{\varepsilon}{2}$. Given any $(\xi_{n})\in l^{1}$, consider
$\xi_{n}=|\xi_{n}|e^{i\theta_{n}}$ for some $\theta_{n}\in\lbrack0,2\pi)$, for
all $n$. Then $(e^{-i\theta_{n}})\in l^{\infty}$ implies $%
{\textstyle\sum}
e^{-i\theta_{n}}\zeta_{n}\in C(K)$ and $(%
{\textstyle\sum}
e^{-i\theta_{k}}\zeta_{k})\xi_{n}x_{n}=|\xi_{n}|x_{n}$ in $X$ for each $n$.
Hence%
\[
(%
{\textstyle\sum}
e^{-i\theta_{k}}\zeta_{k})(%
{\textstyle\sum}
\xi_{n}x_{n})=%
{\textstyle\sum}
|\xi_{n}|x_{n}%
\]
and
\[
\Vert(%
{\textstyle\sum}
e^{-i\theta_{k}}\zeta_{k})(%
{\textstyle\sum}
\xi_{n}x_{n})\Vert\leq\Vert%
{\textstyle\sum}
\xi_{n}x_{n}\Vert
\]
in $X$. Here $%
{\textstyle\sum}
\xi_{n}x_{n}$, as before, denotes the limit of the partial sums of the series
in $X$. When we apply $f$ to both sides of the above equality, we have
\[
\frac{\varepsilon}{2}\Vert(\xi_{n})\Vert_{1}\leq%
{\textstyle\sum}
|\xi_{n}|f(x_{n})=f((%
{\textstyle\sum}
e^{-i\theta_{k}}\zeta_{k})(%
{\textstyle\sum}
\xi_{n}x_{n}))\leq\Vert%
{\textstyle\sum}
\xi_{n}x_{n}\Vert\leq\Vert(\xi_{n})\Vert_{1}.
\]
Let $y=%
{\textstyle\sum}
\frac{x_{n}}{2^{n}}\in X$. Then $\zeta_{n}y=\frac{x_{n}}{2^{n}}$ for each $n$.
Hence the subspace of $X$ that is a copy of $l^{1}$ is contained in the cyclic
subpace $X(y)$. But this is a contradiction. Therefore (*) is proved.

Now assume that $\{x_{\lambda}\}$ is an ultranet in the unit ball of $X$. Then
$\{e_{n}x_{\lambda}\}$ is an ultranet in the weakly compact unit ball of the
reflexive space $e_{n}X$. Therefore $\{e_{n}x_{\lambda}\}$ converges weakly to
some $y_{n}$ in the unit ball of $e_{n}X$. Let $u=%
{\textstyle\sum}
\frac{y_{n}}{2^{n}}$ and let $Y=X(u)$. Then $Y$ may be represented as a Banach
lattice with the quasi-interior point $u.$ Then (4) and Lozanovsky's Theorem
imply that $Y$ is a reflexive Banach lattice. Therefore $Y$ is a KB-space (see
the discussion preceeding the statement of the theorem). Let $z_{n}%
=y_{1}+y_{2}\ldots+y_{n}\in\chi_{n}X$, for each $n$. Then the ultranet
$\{\chi_{n}x_{\lambda}\}$ converges weakly to $z_{n}$. Then $z_{n}$ must be in
the unit ball of $\chi_{n}X$ since $\{\chi_{n}x_{\lambda}\}$ is in the unit
ball of $\chi_{n}X$. But this means that $\{z_{n}\}$ is a positive, increasing
sequence in the unit ball of the KB-space $Y$, since $e_{n}u=\frac{y_{n}%
}{2^{n}}$ for each $n$, implies that $\{y_{n}\}$ is a positive sequence in
$Y$. Therefore there exists $z\in Y$ such that $z=\sup z_{n}$ and $\{z_{n}\}$
converges to $z$ in norm in the unit ball of $Y$. Hence, given $\varepsilon
>0$, we have%
\[
\Vert z-z_{n}\Vert<\frac{\varepsilon}{4}%
\]
for sufficiently large $n$. Also for any $f\in X^{\prime}$ with $\Vert
f\Vert=1$, by (*), we have%
\[
\Vert f-\chi_{n}^{\prime}f\Vert<\frac{\varepsilon}{4}%
\]
for sufficiently large $n$. Consider%
\[
f(x_{\lambda}-z)=(f-\chi_{n}^{\prime}f)(x_{\lambda}-z)+\chi_{n}^{\prime
}f(z_{n}-z)+\chi_{n}^{\prime}f\left(  x_{\lambda}-z_{n}\right)
\]
for all $\lambda$ and for all $n$. Then for some fixed $n$ that is
sufficiently large, we have
\[
|f(x_{\lambda}-z)|<\frac{\varepsilon}{2}+\frac{\varepsilon}{4}+|\chi
_{n}^{\prime}f(\chi_{n}x_{\lambda}-z_{n})|
\]
for all $\lambda$. Since $\{\chi_{n}x_{\lambda}\}$ converges weakly to $z_{n}$
in $\chi_{n}X$, it follows that the ultranet $\{x_{\lambda}\}$ converges to
$z$ weakly in the unit ball of $X$. But any net in $X$ has a subnet that is an
ultranet. That is we have proved that any net in the unit ball of $X$ has a
weakly convergent subnet with limit point in the unit ball. Hence, the unit
ball of $X$ is weakly compact and $X$ is reflexive.
\end{proof}
\end{theorem}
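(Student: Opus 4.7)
The plan is to prove $(1) \Rightarrow (3) \Rightarrow (2) \Rightarrow (4) \Leftrightarrow (5)$ exactly as in Theorem~\ref{t1}, so the substantive work is to prove $(4) \Rightarrow (1)$. By the Bade decomposition cited just before the theorem, finite multiplicity yields a disjoint sequence $\{e_n\} \subset \mathcal{B}$ with $\sup e_n = 1$ and $e_n\mathcal{B}$ of uniform multiplicity $n$ on $e_n X$. Condition $(4)$ is inherited by each band $e_n X$, so Corollary~\ref{c1} gives that every $e_n X$, and therefore every finite partial-sum band $\chi_n X$ with $\chi_n = e_1 + \cdots + e_n$, is reflexive. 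If only finitely many $e_n$ are nonzero, $X$ is a finite direct sum of reflexive spaces and we are done; otherwise we work with infinitely many $e_n \neq 0$ and must manufacture reflexivity from a weak-compactness argument.

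The technical heart of the proof will be the auxiliary claim $(*)$: under $(4)$, $\chi_n' f \to f$ in norm in $X'$ for every $f \in X'$. I would prove $(*)$ by contradiction, in the spirit of the classical James argument. If the sequence $\{\chi_n' f\}$ (which certainly converges weak-$*$ to $f$) is not norm-Cauchy, extract indices $k(n)$ and put $\zeta_1 = \chi_{k(2)}$, $\zeta_n = \chi_{k(n+1)} - \chi_{k(n)}$, obtaining disjoint idempotents with $\sup\zeta_n = 1$ and $\|\zeta_n'f\| \geq \varepsilon$. Using that $K$ is hyperstonian, the map $(\alpha_n) \mapsto \sum \alpha_n \zeta_n$ isometrically embeds $l^\infty$ in $C(K)$, which pushes through $m'$ to show $\{\zeta_n' f\}$ is equivalent to the $c_0$-basis inside $X'$ with $f$ playing the role of the unit of $l^\infty$. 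Picking $x_n \in \zeta_n X$ of norm one with $f(x_n) > \varepsilon/2$ (via Hahn--Banach on $\zeta_n X$) and exploiting the disjointness of the $\zeta_n$ together with the $l^\infty$-action in $C(K)$, one produces an $l^1$-copy in $X$ contained in the cyclic subspace $X\bigl(\sum x_n/2^n\bigr)$, contradicting $(4)$. This $l^\infty$-in-dual yields $l^1$-in-cyclic-primal step is what I expect to be the most delicate part.

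Once $(*)$ is in hand, I would prove reflexivity by the ultranet test. Given an ultranet $\{x_\lambda\}$ in the unit ball of $X$, the image ultranet $\{e_n x_\lambda\}$ lies in the weakly compact unit ball of the reflexive space $e_n X$ and converges weakly to some $y_n \in e_n X$. Set $u = \sum y_n/2^n$ and $Y = X(u)$. The cyclic space $Y$ is, by Section~2, a Banach lattice with quasi-interior point $u$ and ideal centre $m(C(K))|_Y$, and $(4)$ together with Lozanovsky's theorem makes it reflexive, hence a KB-space, with $e_n u = y_n/2^n$ positive. The partial sums $z_n = y_1 + \cdots + y_n$ are then positive and increasing in $Y$; since $\{\chi_n x_\lambda\}$ converges weakly to $z_n$ in the reflexive space $\chi_n X$, each $z_n$ lies in the unit ball. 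The KB property produces $z \in Y$ with $z_n \to z$ in norm.

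To close the argument, write
\[
f(x_\lambda - z) = (f - \chi_n' f)(x_\lambda - z) + \chi_n' f(z_n - z) + \chi_n' f(x_\lambda - z_n)
\]
for each $f \in X'$, $\|f\|=1$. For a given $\varepsilon>0$, choose $n$ large so that the first term is below $\varepsilon/2$ by $(*)$ and the second below $\varepsilon/4$ by norm convergence $z_n \to z$; the third term tends to $0$ along $\lambda$ because $\chi_n x_\lambda \to z_n$ weakly in $\chi_n X$. Hence $f(x_\lambda) \to f(z)$ for every $f$, so $x_\lambda \to z$ weakly in the unit ball. Since every net has an ultranet subnet, the unit ball of $X$ is weakly compact and $X$ is reflexive. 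The main obstacle, as noted, will be $(*)$, where the hyperstonian structure of $K$, the Bade completeness of $\mathcal{B}$, and the no-$l^1$-in-cyclic hypothesis must all be used jointly.
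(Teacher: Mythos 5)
Your proposal is correct and follows essentially the same route as the paper's own proof: the Bade decomposition into uniform-multiplicity bands handled by Corollary~\ref{c1}, the James-style claim $(*)$ proved by extracting disjoint idempotents $\zeta_n$, embedding $l^\infty$ into $C(K)$, and producing an $l^1$-copy inside the cyclic subspace $X(\sum x_n/2^n)$, and finally the ultranet argument through the KB-space $X(u)$ with the same three-term estimate for $f(x_\lambda - z)$. No substantive differences to report.
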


Dieudonn\'{e}~\cite{D} constructed the famous example of a Banach space $X$
and a Bade complete Boolean algebra of projections $\mathcal{B}$ on $X$ that
is of uniform multiplicity 2 and has the following property $(\mathcal{D})$:
for any $x,y \in X$ and any $e \in\mathcal{B} \setminus\{0\}$, $eX$ is not
equal (or even isomorphic) to the sum of the cyclic subspaces $eX(x)$ and
$eX(y)$. The space $X$ in Dieudonn\'{e}'s example is not reflexive but a minor
modification of his example outlined in Example~\ref{e3} below provides a
reflexive space with similar properties.

\begin{example}
\label{e3} (Dieudonn\'{e}) Let the interval $[0,\gamma]$ and the functions
$\omega_{i},\ i=1,2,3$, be as constructed in~\cite[Section 6]{D}. Let $f$ be a
Lebesgue measurable function on $[0,\gamma]$. As usual we denote the
equimeasurable decreasing rearrangement of $|f|$ as $|f|^{\star}$. Let
$L(0,\gamma)$ be the space of equivalence classes of almost everywhere finite
measurable functions on $[0,\gamma]$. It is familiar that $L(0,\gamma)$ is a
Dedekind complete vector lattice and has $1,$ the unit of $L^{\infty}%
(0,\gamma)$, as a weak order unit. As such multiplication by the functions in
$L^{\infty}(0,\gamma)$ induces on $L(0,\gamma)$ a module structure.
Furthermore any (order) ideal in $L(0,\gamma)$ inherits the same module
structure. In fact a subspace of $L(0,\gamma)$ is an ideal if and only if it
is an $L^{\infty}(0,\gamma)$-submodule of $L(0,\gamma)$. Consider the Lorentz
spaces
\[
L_{\omega_{i}}^{2}=\{f\in L(0,\gamma):\;\int\limits_{0}^{\gamma}(|f|^{\star
})^{2}\omega_{i}dx<\infty\},\ i=1,2,3
\]
with the norm $N_{i}(f)=\big{(}\int\limits_{0}^{\gamma}(|f|^{\star})^{2}%
\omega_{i}dx\big{)}^{1/2}$.

The spaces $L_{\omega_{i}}^{2},\ i=1,2,3$, are ideals in $L(0,\gamma)$ and are
reflexive (see~\cite{Lo}) Banach lattices with the ideal center $Z(L_{\omega
_{i}}^{2})=L^{\infty}(0,\gamma)$, $i=1,2,3$. Dieudonn\'{e}'s construction
in~\cite{D} shows that $\omega_{i}\omega_{j}\in L^{1}(0,\gamma)$ if and only
if $i\neq j,\ (i,j=1,2,3)$. Moreover, $\omega_{i}^{1/2}\in L_{\omega_{j}}^{2}$
if and only if $i\neq j,(i,j=1,2,3)$. Consider the Banach $L^{\infty}%
(0,\gamma)$-module $E=L_{\omega_{1}}^{2}\oplus L_{\omega_{2}}^{2}\oplus
L_{\omega_{3}}^{2}$ endowed with the norm $N(f,g,h)=N_{1}(f)+N_{2}%
(g)+N_{3}(h)$. Let $X=\{(f,g,h)\in E:\;f+g+h=0\}$ and let $\mathcal{B}$ be the
Boolean algebra of all the idempotents in $L^{\infty}(0,\gamma)$. Then the
proof given by Dieudonn\'{e} in~\cite{D} shows that $\mathcal{B}$ is a Bade
complete Boolean algebra of projections on $X$ that is of uniform multiplicity
2 with the property $\mathcal{D}$.
\end{example}

\begin{remark}
\label{r1} In his study of multiplicity of Boolean algebras of projections,
motivated by Dieudonn\'{e}'s Example, Tzafriri (see~\cite{Tz1}) gave the
following formal definition of property $\mathcal{D}$. Suppose $X$ is a Banach
space and $\mathcal{B}$ is a Bade complete Boolean algebra of projections on
$X$ of uniform multiplicity $n$.

$\mathcal{B}$ has property $\mathcal{D}$ : for any $x_{i} \in X (i=1, \ldots,
n)$, any $e \in\mathcal{B} \setminus\{0\} $, and any $p, \ 1 \leq p < n$, $eX$
is not equal to the sum of $eX(x_{1}, \ldots, x_{p})$ and $eX(x_{p+1}, \ldots,
x_{n})$.

Tzafriri showed in~\cite{Tz1} that $\mathcal{B}$ has property $\mathcal{D}$ on
$X$ if and only if any bounded projection on $X$ that commutes with
$\mathcal{B}$ is itself in $\mathcal{B}$.
\end{remark}

In connection with Example~\ref{e3} one can consider the following question.

\begin{problem}
\label{p1} When is it possible to embed a reflexive (in particular, finitely
generated) Banach $C(K)$-module into a reflexive Banach lattice as a closed subspace?
\end{problem}

\centerline{ \textbf{Acknowledgements}}

We are grateful to H. Rosenthal and T. Oikhberg, respectively, for remarks
that allowed us to simplify condition (3) of Theorem~\ref{t1} and the proof of
Lemma~\ref{L3}, respectively.


\begin{thebibliography}{99}                                                                                               %


\bibitem {AAK}Abramovich, Y. A., Arenson, E. L., and Kitover, A. K.,
\textit{Banach }$C(K)$\textit{-modules and operators preserving disjointness},
Pitman Research Notes in Mathematical Series, \textbf{277}, Longman Scientific
and Tecnical, 1992

\bibitem {B1}Bade, W. G., On Boolean algebras of projections and algebras of
operators, \textit{Trans. Amer. Math. Soc., }\textbf{80} (1955), 345-360

\bibitem {B2}Bade, W. G., A multiplicity theory for Boolean algebras of
projections in Banach spaces, \textit{Trans. Amer. Math. Soc., }\textbf{92}
(1959), 508-530

\bibitem {BP}Bessaga, C., and Pelczynski, A., Some remarks on conjugate spaces
containing subspaces isomorphic to $c_{0}$, \textit{Bull. Acad. Polon.
Sci.,}\textbf{6} (1958), 249-250

\bibitem {D}Dieudonn\'{e}, J., Champs de vecteurs non localement trivaux,
\textit{Archiv der Math.}, \textbf{7} (1956), 6 - 10.

\bibitem {DS}Dunford, N. and Schwartz, J. T., \textit{Linear Operators, Part
III: Spectral Operators,}Wiley, New York (1971)

\bibitem {Hag}Hagler, J., Some more Banach spaces which contain $L^{1}$,
\textit{Studia Math.,}\textbf{ 46} (1973), 35-42

\bibitem {HO}Hadwin, D., and Orhon, M., Reflexivity and approximate
reflexivity for Boolean algebras of projections, \textit{J. Func. Anal.,
}\textbf{87 }(1989), 348-358

\bibitem {J}James, R. C., Bases and Reflexivity of Banach spaces, \textit{Ann.
of Math. (2)}, \textbf{52} (1950), 518-527

\bibitem {LiT}Lindenstrauss, J., and Tzafriri, L., \textit{Classical Banach
Spaces I}, Springer Verlag, Berlin, 1977

\bibitem {Lo}Lorentz, G., Some new functional spaces, \textit{Ann. of Math.},
\textbf{51}, 37 - 55

\bibitem {Lot}Lotz, H. P., Minimal and reflexive Banach lattices,
\textit{Math. Ann.}, \textbf{209} (1973), 117-126

\bibitem {Loz}Lozanovsky, G. Ya., Some topological conditions on Banach
lattices and reflexivity conditions on them, \textit{Soviet Math. Dokl.},
\textbf{9} (1968), 1415-1418

\bibitem {MN}Meyer-Nieberg, P., \textit{Banach Lattices}, Universitext,
Springer Verlag, Berlin 1991

\bibitem {O1}Orhon, M., Boolean algebras of commuting projections,
\textit{Math. Z.,}\textbf{ 183 }(1983), 531-537

\bibitem {O2}Orhon, M., Algebras of operators containing a Boolean algebra of
projections of finite multiplicity, \textit{Operators in indefinite metric
spaces, scattering theory and other topics (Bucharest, 1985), }Oper. Theory:
Adv. Appl., \textbf{24}, Birkh\"{a}user, Basel-Boston, 1987, 265-281

\bibitem {O3}Orhon, M., The ideal center of the dual of a Banach
lattice\textit{, Positivity}, \textbf{14 }(2010), 841-847

\bibitem {Pel}Pelczynski, A., Projections in certain Banach spaces,
\textit{Studia Math.},\textbf{ 19} (1960), 209-228

\bibitem {Rl}Rall, C., \textit{Boolesche Algebren von Projectionen auf
Banachr\"{a}umen, }PhD Thesis, Universit\"{a}t T\"{u}bingen, 1977

\bibitem {Sch}Schaefer, H. H., \textit{Banach lattices and positive
operators}, Springer Verlag, Berlin, 1974

\bibitem {Tz1}Tzafriri, L., On multiplicity theory for Boolean algabras of
projections, \textit{Israel J. Math.} \textbf{4} (1966), 217-224.

\bibitem {Tz2}Tzafriri, L., Reflexivity of cyclic Banach spaces, \textit{Proc.
Amer. Math. Soc.}, \textbf{22} (1969), 61-68

\bibitem {Ve}Veksler, A. I., Cyclic Banach spaces and Banach lattices,
\textit{Soviet Math. Dokl}., \textbf{14 }(1973), 1773-1779

\bibitem {W}Willard, S., \textit{General Topology}, Addison-Wesley, Reading, Mass.,1971
\end{thebibliography}
\end{document}